\documentclass[12pt]{amsart}

\textwidth=5.5in \textheight=8.5in
\usepackage{cancel}
\usepackage{latexsym, amssymb, amsmath,esint}
\usepackage{soul}
\usepackage{amsfonts, graphicx}
\usepackage{graphicx,color}
\newcommand{\be}{\begin{equation}}
\newcommand{\ee}{\end{equation}}
\newcommand{\beq}{\begin{eqnarray}}
\newcommand{\eeq}{\end{eqnarray}}

\usepackage{wasysym,stmaryrd}
\newtheorem{thm}{Theorem}[section]

\newtheorem{lma}[thm]{Lemma}
\newtheorem{prop}[thm]{Proposition}
\newtheorem{cor}[thm]{Corollary}
\newtheorem{defn}[thm]{Definition}
\newtheorem{ques}[thm]{Question}
\theoremstyle{remark}
\newtheorem{rem}[thm]{Remark}
\numberwithin{equation}{section}

\def\be{\begin{equation}}
\def\ee{\end{equation}}
\def\bee{\begin{equation*}}
\def\eee{\end{equation*}}

\def\e{\varepsilon}

\def\a{{\alpha}}

\begin{document}

\title[]
{Llarull's theorem on punctured sphere with $L^\infty$ metric}

\author{Jianchun Chu}
\address[Jianchun Chu]{School of Mathematical Sciences, Peking University, Yiheyuan Road 5, Beijing 100871, People's Republic of China}
\email{jianchunchu@math.pku.edu.cn}

 \author{Man-Chun Lee}
\address[Man-Chun Lee]{Department of Mathematics, The Chinese University of Hong Kong, Shatin, Hong Kong, China}
\email{mclee@math.cuhk.edu.hk}

 \author{Jintian Zhu}
\address[Jintian Zhu]{Institute for Theoretical Sciences, Westlake University, 600 Dunyu Road, 310030, Hangzhou, Zhejiang, People’s Republic of China}
\email{zhujintian@westlake.edu.cn}

\renewcommand{\subjclassname}{\textup{2020} Mathematics Subject Classification}
\subjclass[2020]{Primary 51F30, 53C24}

\date{\today}

\begin{abstract}
The classical Llarull theorem states that a smooth metric on $n$-sphere cannot have scalar curvature no less than $n(n-1)$ and dominate the standard spherical metric at the same time unless it is the standard spherical metric. In this work, we prove that Llarull's rigidity theorem holds for $L^{\infty}$ metrics on spheres with finitely many points punctured. This is related to a question of Gromov.

\end{abstract}

\maketitle

\markboth{Jianchun Chu, Man-Chun Lee, Jintian Zhu}{Singular Llarull's Theorem}

\section{Introduction}\label{introduction}

In recent years, a lot of progress has been made in understanding the scalar curvature. We refer readers to Gromov's lecture note \cite{Gromov} on scalar curvature for a comprehensive overview. When the model is the standard sphere, Llarull \cite{Llarull} showed that if a spin manifold $(M^n,g)$ has $\mathrm{scal}_g\geq n(n-1)$ and admits an area non-increasing smooth map $f$ with non-zero degree to the standard sphere $\mathbb{S}^n$, then the map must be an isometry. The method is based on Dirac operators, and is inspired by the fundamental work of Gromov-Lawson \cite{GromovLawson}.

There has been many progress in generalizing Llarull's theorem in many different aspects. In \cite{GoetteSemmelmann}, Goette-Semmelmann showed that the rigidity result still holds when the sphere is replaced by closed manifolds with non-zero Euler characteristic and non-negative curvature operator. This result was generalized by Lott \cite{Lott} to compact manifolds with boundary. Along this direction, Cecchini-Zeidler \cite{CecchiniZeidler} extended the rigidity for odd-dimensional manifolds with a warped product metric. Later, B\"ar-Brendle-Hanke-Wang \cite{BarBrendleHankeWang} and Wang-Xie \cite{WangXie} generalized this result to all dimensions independently.
It is also worth mentioning that in dimension 4, the distance non-increasing version of Llarull's theorem was recently proved by Cecchini-Wang-Xie-Zhu \cite{CecchiniWangXiuZhu} without the spin assumption.

Recently, there is an increasing interest in understanding the rigidity in scalar curvature on metrics with weaker regularity. For example, it was asked by Gromov \cite{Gromov} if Llarull's theorem also holds in the sense of distance isometry if the non-zero degree map $f$ from the closed spin manifold $M^n$ to $\mathbb{S}^n$ is only $1$-Lipschitz. This was answered affirmatively by Cecchini-Hanke-Schick \cite{CecchiniHankeSchick} and Lee-Tam \cite{LeeTam2} independently using different methods. In this work, we are interested in the generalization in another direction. In \cite[Section 3.9]{Gromov}, Gromov asked if Llarull's theorem holds for metrics that are defined on the sphere with some subset removed.
\begin{ques}
Let $(\mathbb{S}^{n},g_{\mathrm{sph}})$ be the standard sphere and $\mathcal{S}$ be a subset of $\mathbb{S}^{n}$. Suppose that $g$ is a smooth metric on $\mathbb{S}^{n}\setminus\mathcal{S}$ such that
\begin{enumerate}\setlength{\itemsep}{1mm}
\item[(a)] $\mathrm{scal}_g\geq \mathrm{scal}_{g_{\mathrm{sph}}}=n(n-1)$;
\item[(b)] $g\geq g_{\mathrm{sph}}$
\end{enumerate}
on $M\setminus \mathcal{S}$. Which conditions on $\mathcal{S}$ can ensure that $g=g_{\mathrm{sph}}$?
\end{ques}

When $\mathcal{S}$ consists of two antipodal points, Gromov gave a sketched argument in \cite[Section 5.5 and 5.7]{Gromov}. 
When $n=3$, it was studied by  Hu-Liu-Shi \cite{HuLiuShi} using $\mu$-bubbles method and Hirsch-Kazaras-Khuri-Zhang \cite{HirschKazarasKhuriZhang}  using space-time harmonic function. The rigidity with two antipodal points removed was further generalized by B\"ar-Brendle-Hanke-Wang \cite{BarBrendleHankeWang} and Wang-Xie \cite{WangXie}  to all dimensions using spin method. Their method are all based on the warped product interpretation of $\mathbb{S}^n\setminus \{\pm x_0\}$. We might interpret this as a removable singularities result.

The removable singularities in view of scalar curvature rigidity is also related to another question of Schoen, for example see \cite[Conjecture 1.5]{LiMantoulidis}. To state it properly, let us recall the definition of $L^{\infty}$ metric first:

\begin{defn}
We say that $g$ is an $L^\infty$ metric on a closed manifold $M$ if it is a measurable sections of $\mathrm{Sym}_2(T^*M)$ such that $\Lambda^{-1}g_0\leq g\leq \Lambda g_0$ almost everywhere on $M$ for some smooth metric $g_0$ on $M$ and constant $\Lambda>1$.
\end{defn}

When the model space is the standard torus, Schoen's question can be stated as follows:

\begin{ques}
If an $L^\infty$ metric $g$ on torus $\mathbb{T}^n$ is smooth outside a closed embedded sub-manifold $\mathcal{S}$ with co-dimension greater than or equal to $3$. If $\mathrm{scal}_g\geq 0$ on $\mathbb{T}^n\setminus\mathcal{S}$, then $g$ is a smooth flat metric.
\end{ques}

We refer interested readers to \cite{LiMantoulidis,Kazaras,JiangShengZhang,LeeTam1,ChuLee} for some recent progress on this problem. Motivated by the questions of Gromov and Schoen, we consider $L^{\infty}$ metrics on spheres with finitely many points punctured. 
More generally, we prove the following:
\begin{thm}\label{intro-main}
Let $M^n$ be a closed spin manifold with $n\geq 3$ and $\mathcal{S}=\{p_i\}_{i=1}^N$ be a finite set in $M$. 
Suppose that $f:M\to \mathbb{S}^n$ is a Lipschitz map with non-zero degree and $g$ is an $L^\infty$ metric so that both $f,g$ are smooth outside $\mathcal{S}$ and satisfy
\begin{enumerate}\setlength{\itemsep}{1mm}
\item[(a)] $\mathrm{scal}_g\geq \mathrm{scal}_{g_{\mathrm{sph}}}=n(n-1)$;
\item[(b)] $f$ is area non-increasing, i.e. $|\Lambda^2 \mathrm{d}f|\leq 1$
\end{enumerate}
on $M\setminus \mathcal{S}$, 
then $f$ is a distance isometry and satisfies $f^*g_{\mathrm{sph}}=g$ outside $\mathcal{S}$.
\end{thm}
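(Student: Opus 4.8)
\emph{Proof proposal.} The plan is to run Llarull's Dirac operator argument directly on the punctured manifold $M\setminus\mathcal{S}$, using that a finite set has codimension $n\geq 3$ and hence vanishing $H^1$-capacity, together with the uniform bound $\Lambda^{-1}g_0\leq g\leq\Lambda g_0$, to treat $\mathcal{S}$ as a removable singularity. On $(M\setminus\mathcal{S},g)$ with its spin structure, form the twisted bundle $W=S_M\otimes f^*S_{\mathbb{S}^n}$, where $S_M$ is the spinor bundle of $g$ and $f^*S_{\mathbb{S}^n}$ carries the pulled-back Levi-Civita connection of the round metric (for $n$ odd one uses the standard modification of \cite{Llarull}, e.g. $\mathrm{Cl}_1$-linear Dirac operators or passing to $M\times\mathbb{S}^1$). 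The twisted Dirac operator $D$ on $W$ satisfies the Lichnerowicz--Schr\"odinger--Weitzenb\"ock identity $D^2=\nabla^*\nabla+\tfrac14\mathrm{scal}_g+\mathfrak{R}$, where $\mathfrak{R}$ is the curvature endomorphism coming from $f^*$ of the curvature of $\mathbb{S}^n$. Llarull's pointwise estimate gives $\mathfrak{R}\geq-\tfrac14 n(n-1)$ wherever $f$ is area non-increasing, since the singular values $\lambda_a$ of $df$ satisfy $\lambda_a\lambda_b\leq1$ for $a\neq b$; moreover for $n\geq3$ equality forces $\lambda_a\equiv1$, i.e. $df$ a linear isometry. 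Together with hypothesis (a) this yields $\tfrac14\mathrm{scal}_g+\mathfrak{R}\geq0$, hence $D^2\geq\nabla^*\nabla$ on $M\setminus\mathcal{S}$.

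The analytic core is to produce a nontrivial, bounded, $L^2$ harmonic spinor $\psi$ for $D$ on $(M\setminus\mathcal{S},g)$. I would obtain it by approximation: pick smooth metrics $g_j$ and smooth maps $f_j$ on all of $M$, agreeing with $(g,f)$ outside shrinking neighborhoods $N_{1/j}(\mathcal{S})$ and staying within the $\Lambda$-bounds; since all $f_j$ are homotopic, the twisted index $\operatorname{ind}(D_j^+)=\deg(f)\cdot c_n$ is the same nonzero topological quantity, so there are nontrivial $\psi_j$ with $D_j\psi_j=0$, $\|\psi_j\|_{L^2}=1$. On a fixed $M\setminus N_\delta(\mathcal{S})$ one has $D_j^2\geq\nabla^*\nabla$ for $j$ large, so $|\psi_j|^2$ is subharmonic there and Moser iteration plus interior elliptic estimates give uniform $C^0$- and $C^1_{loc}$-bounds on compact subsets of $M\setminus\mathcal{S}$; a non-concentration estimate near $\mathcal{S}$ — using the $L^\infty$ control of $g$, the smallness of $\mathrm{Vol}_g(N_{1/j})$, and a suitable choice of $N_{1/j}$ relative to the (a priori large) curvature bounds on the interpolation region — prevents the $L^2$-mass from escaping into $\mathcal{S}$. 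Passing to a subsequential limit yields $\psi\neq0$ with $D\psi=0$. (An alternative is to conformally blow up each puncture via the Green's function of the conformal Laplacian $L_g$, which is invertible because $\mathrm{scal}_g\geq n(n-1)>0$, producing a complete metric that is scalar-flat near the new ends where $f$ is taken constant, and then invoke a relative index theorem of Gromov--Lawson type.)

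Given such a $\psi$, the rigidity is immediate: $D\psi=0$ and $D^2\geq\nabla^*\nabla$ give $\Delta|\psi|^2=2|\nabla\psi|^2+2\big(\tfrac14\mathrm{scal}_g+\mathfrak{R}\big)|\psi|^2\geq0$ on $M\setminus\mathcal{S}$, so $|\psi|^2$ is a bounded subharmonic function off a set of zero capacity; it therefore extends to a subharmonic function on the closed manifold $M$, hence is constant. Consequently $\psi$ is parallel and nowhere zero, and $\tfrac14\mathrm{scal}_g+\mathfrak{R}\equiv0$ on $M\setminus\mathcal{S}$. Since $\mathrm{scal}_g\geq n(n-1)$ and $\mathfrak{R}\geq-\tfrac14 n(n-1)$, both inequalities are equalities everywhere; feeding the nonvanishing $\psi$ into the equality case of Llarull's curvature lemma (which, using $n\geq3$, forces all $\lambda_a=1$) shows $df$ is a linear isometry at every point of $M\setminus\mathcal{S}$, i.e. $f^*g_{\mathrm{sph}}=g$ on $M\setminus\mathcal{S}$.

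Finally I upgrade to the metric-space statement. The map $f|_{M\setminus\mathcal{S}}$ is then a local isometry into $\mathbb{S}^n$; since $f$ is continuous on the compact $M$, the preimage of a compact subset of $\mathbb{S}^n\setminus f(\mathcal{S})$ is a compact subset of $M\setminus\mathcal{S}$, so $f|_{M\setminus\mathcal{S}}\colon M\setminus\mathcal{S}\to\mathbb{S}^n\setminus f(\mathcal{S})$ is a proper local homeomorphism, hence a covering map, and since $\mathbb{S}^n\setminus f(\mathcal{S})$ is connected and simply connected for $n\geq3$ it is in fact a Riemannian isometry. By continuity $f\colon M\to\mathbb{S}^n$ is a homeomorphism, and because in dimension $n\geq3$ length-minimizing curves can be perturbed away from the finite sets $\mathcal{S}$ and $f(\mathcal{S})$, the length metrics $d_g$ and $d_{g_{\mathrm{sph}}}$ are computed by curves avoiding those sets; hence $d_g(x,y)=d_{g_{\mathrm{sph}}}(f(x),f(y))$ for all $x,y$, i.e. $f$ is a distance isometry. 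I expect the main obstacle to be the second paragraph — constructing the limiting harmonic spinor with uniform control near $\mathcal{S}$, where $\mathrm{scal}_{g_j}$ and the twist curvature of $f_j$ are only crudely bounded, and ruling out concentration of $L^2$-mass at the punctures; this is exactly the point where the capacity-zero property of $\mathcal{S}$ and the $L^\infty$-bound on $g$ must be combined quantitatively.
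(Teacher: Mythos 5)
Your first (main) route has a genuine gap, and you correctly flag it yourself: you never rule out concentration of the $L^{2}$-mass of $\psi_{j}$ at the punctures. On the interpolation annuli $N_{1/j}(\mathcal{S})$ you have no control of $\mathrm{scal}_{g_{j}}$ or of the twist curvature of $f_{j}$, so the Weitzenb\"ock inequality $D_{j}^{2}\geq\nabla^{*}\nabla$ can fail there, and without a quantitative mechanism the limit spinor could be zero. The paper never constructs a harmonic spinor on the punctured or opened-up manifold; instead it uses a dichotomy (Proposition~\ref{prop:dicho}): either $\mathrm{scal}_{g}\equiv\sigma_{f,g}$ on $M\setminus\mathcal{S}$, in which case $\mu_{i}\mu_{j}\equiv 1$ gives $f^{*}g_{\mathrm{sph}}=g$ directly and \cite{CecchiniHankeSchick} gives the distance isometry, or a conformal factor can be found making $\mathrm{scal}_{\tilde g}\geq\sigma_{f,\tilde g}+\delta_{1}$ strictly, and this second alternative is then ruled out by a contradiction argument rather than by extracting a nontrivial spinor. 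This reorganization makes the whole delicate convergence step you worry about unnecessary.

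Your parenthetical alternative (conformal blow-up via a Green-type function, then invoke a Llarull-type statement for the complete open manifold) is in fact very close to the paper's actual proof, but several details you gloss over are exactly where the work is. (i) You cannot use the Green's function of the conformal Laplacian $L_{g}=-c_{n}\Delta_{g}+\mathrm{scal}_{g}$: $g$ is only $L^{\infty}$ across $\mathcal{S}$, so $\mathrm{scal}_{g}$ need not be locally bounded (or even locally integrable) near the punctures, and positivity of $\mathrm{scal}_{g}$ away from $\mathcal{S}$ does not give you the needed Green's function asymptotics. The paper instead works with $\mathcal{L}_{g}=-c_{n}\Delta_{g}+\phi(\mathrm{scal}_{g}-\sigma_{f})$, whose zeroth-order term is bounded; the truncation $\phi$ is chosen so that the inequality $\phi(s)\leq s$ for $s\geq 0$ still yields $\mathrm{scal}_{\hat g_{\e}}\geq\sigma_{f,\hat g_{\e}}+\delta_{1}(1+\e G)^{-\frac{n+2}{n-2}}\geq 0$ after the blow-up. (ii) Saying ``$f$ is taken constant near the new ends'' hides the real issue: replacing $f$ by $\hat f=\phi_{\delta}\circ f$ costs a factor $(1+\delta)^{2}$ in $\sigma_{\hat f}$, and one needs a uniform positive gap $\delta_{1}$ beforehand (precisely what the dichotomy supplies) to absorb this loss near infinity, and the estimate $1+\e G\leq 2$ away from $\mathcal{S}$ to absorb the conformal factor. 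Without the strict inequality your $\sigma_{\hat f}$ could exceed the scalar curvature after composing with $\phi_{\delta}$. (iii) The contradiction is then obtained from Zhang's version of Llarull for complete manifolds (Proposition~\ref{prop:Zhang-noncpt}: $\mathrm{scal}>\sigma_{\hat f}$ on $\mathrm{supp}(\mathrm{d}\hat f)$ forces $\inf\mathrm{scal}<0$), not from a Gromov--Lawson relative index producing a nonzero spinor.

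Two smaller issues in your endgame. Your covering-space argument assumes $f|_{M\setminus\mathcal{S}}$ maps into $\mathbb{S}^{n}\setminus f(\mathcal{S})$, but a priori $f$ may send a regular point to a point of $f(\mathcal{S})$, so the restriction is not well-defined as a map between the punctured spaces; the paper bypasses this by citing the Lipschitz rigidity result of Cecchini--Hanke--Schick (Theorem 2.4 there), which already promotes $f^{*}g_{\mathrm{sph}}=g$ a.e.\ for a $1$-Lipschitz degree-nonzero map to a metric isometry. Also, for the subharmonic-extension step, one should be careful that your $\psi$ lives on the incomplete manifold with only an $L^{\infty}$ metric across $\mathcal{S}$; while the capacity-zero argument is plausible, the paper avoids having to justify any such extension because it never produces $\psi$ in the first place.

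In short: your instinct about the conformal blow-up is correct and matches the paper in spirit, but the dichotomy (Proposition~\ref{prop:dicho}) is the key missing idea that replaces your problematic spinor-extraction step, and the truncated operator $\mathcal{L}_{g}$ (not the conformal Laplacian) is what makes the Green's function analysis legitimate with an $L^{\infty}$ metric.
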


In particular, if we take $M=\mathbb{S}^{n}$ and $f=\mathrm{id}$, then it reduces back to the setting of Gromov's question with additional $L^\infty$ assumption of metric across the point singularities $\mathcal{S}$.

\begin{cor}
Suppose that $g$ is an $L^{\infty}$ metric on $\mathbb{S}^{n}\setminus\mathcal{S}$ for some finite subset $\mathcal{S}$ such that
\begin{enumerate}\setlength{\itemsep}{1mm}
\item[(a)] $\mathrm{scal}_g\geq \mathrm{scal}_{g_{\mathrm{sph}}}=n(n-1)$;
\item[(b)] $g\geq g_{\mathrm{sph}}$
\end{enumerate}
on $M\setminus \mathcal{S}$. Then $g=g_{\mathrm{sph}}$.
\end{cor}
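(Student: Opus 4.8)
The corollary is an immediate application of Theorem~\ref{intro-main} once one checks that the hypotheses of that theorem are met by the choice $M=\mathbb{S}^n$, $f=\mathrm{id}$, and $\mathcal{S}$ the given finite set. First I would observe that $n\geq 3$ by hypothesis and that $\mathbb{S}^n$ is a closed spin manifold, so the structural assumptions of Theorem~\ref{intro-main} hold. The identity map $\mathrm{id}:\mathbb{S}^n\to\mathbb{S}^n$ is smooth everywhere (in particular outside $\mathcal{S}$), is Lipschitz, and has degree $1\neq 0$, so the map hypothesis is satisfied. The metric $g$ is assumed to be $L^\infty$ on $\mathbb{S}^n$ and smooth on $\mathbb{S}^n\setminus\mathcal{S}$, which matches the regularity requirement on $g$ in the theorem.

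Next I would verify conditions (a) and (b) of Theorem~\ref{intro-main}. Condition (a) is literally the hypothesis of the corollary: $\mathrm{scal}_g\geq n(n-1)$ on $\mathbb{S}^n\setminus\mathcal{S}$. For condition (b), the area non-increasing condition $|\Lambda^2\,\mathrm{d}f|\leq 1$ for $f=\mathrm{id}$ translates, via the standard comparison of the induced quadratic form on $\Lambda^2 T^*M$, into the statement that $g\geq g_{\mathrm{sph}}$ on $\mathbb{S}^n\setminus\mathcal{S}$; indeed, if $g\geq g_{\mathrm{sph}}$ then every principal value of $g$ relative to $g_{\mathrm{sph}}$ is $\geq 1$, whence every product of two such principal values (the eigenvalues of $\Lambda^2\,\mathrm{d}(\mathrm{id})^*$) is $\geq 1$, so $\mathrm{id}$ is area non-increasing as a map from $(\mathbb{S}^n,g)$ to $(\mathbb{S}^n,g_{\mathrm{sph}})$. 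This is exactly hypothesis (b).

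With all hypotheses of Theorem~\ref{intro-main} verified, the theorem yields that $f=\mathrm{id}$ is a distance isometry and $f^*g_{\mathrm{sph}}=g$ outside $\mathcal{S}$. Since $\mathrm{id}^*g_{\mathrm{sph}}=g_{\mathrm{sph}}$, this gives $g=g_{\mathrm{sph}}$ on $\mathbb{S}^n\setminus\mathcal{S}$. Finally, because $\mathcal{S}$ is finite (hence of measure zero) and $g$ is an $L^\infty$ metric on all of $\mathbb{S}^n$, the equality $g=g_{\mathrm{sph}}$ almost everywhere upgrades to equality as $L^\infty$ metrics on $\mathbb{S}^n$, so one concludes $g=g_{\mathrm{sph}}$.

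\textbf{Main obstacle.} There is essentially no obstacle here: the only point requiring a line of care is the translation between ``$g\geq g_{\mathrm{sph}}$'' and ``area non-increasing'' for the identity map, which is a routine linear-algebra fact about the second exterior power, and the remark that a finite set is negligible so that the pointwise conclusion extends to an $L^\infty$ statement on the whole sphere. All the analytic content is carried by Theorem~\ref{intro-main}.
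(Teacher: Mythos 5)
Your proposal is correct and is exactly the paper's intended argument: the authors state the corollary as the specialization $M=\mathbb{S}^n$, $f=\mathrm{id}$ of Theorem~\ref{intro-main}, and your verification of the hypotheses (spin structure, nonzero degree, and the translation of $g\geq g_{\mathrm{sph}}$ into the area non-increasing condition for the identity map via singular values) supplies precisely the routine details the paper leaves implicit.
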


Our method is largely motivated by the strategy of Cecchini-Wang-Xie-Zhu \cite{CecchiniWangXiuZhu} and smoothing method of Li-Mantoulidis \cite{LiMantoulidis}. Inspired by the work of Li-Mantoulidis, we smooth out the point singularities $\mathcal{S}$ using the Green function which open up the compact manifold to a complete non-compact manifold. In \cite{LiMantoulidis}, gluing  using minimal surface is performed so that the complete non-compact manifold is compactified with topology unchanged overall. This relies heavily on the classification of minimal surfaces and thus the method is restrictive to dimension three. In contrast, we work on the complete non-compact manifold directly. By constructing a smooth non-zero degree map from the opened non-compact manifold to the standard sphere, we derive a contradiction using conformal method in a similar spirit of \cite{CecchiniWangXiuZhu}.

\vskip0.2cm

{\it Acknowledgement:} The first-named author was partially supported by National Key R\&D Program of China 2023YFA1009900 and NSFC grant 12271008. The second-named author was partially supported by Hong Kong RGC grant (Early Career Scheme) of Hong Kong No. 24304222, No. 14300623, and a NSFC grant No. 12222122. The third-named author was partially supported by National Key R\&D Program of China with grant no. 2023YFA1009900 as well as the startup fund from Westlake University.

\section{Conformal deformation}\label{sec:conf}

In this section, we discuss the scalar curvature improvement using the conformal deformation. Let $M^n$ be a closed manifold with $n\geq 3$ and $\mathcal{S}=\{p_i\}_{i=1}^N$ be a finite set in $M$. 
Suppose that $f:M\to \mathbb{S}^n$ is a Lipschitz map and $g$ is an $L^\infty$ metric so that both $f,g$ are smooth outside $\mathcal{S}$.
We consider the scalar curvature lower bound in term of total area defect. 
For $x\in M\setminus\mathcal{S}$, we denote
\begin{equation}
\sigma_{f,g}(x)=\sum_{i\neq j}\mu_i \mu_j=2\sum_{i<j}\mu_i\mu_j,
\end{equation}
where $\mu_i\geq 0$ are singular values of the linear map 
$$\mathrm df_x:(T_xM,g_x)\to (T_{f(x)}\mathbb S^n,g_{\mathbb S^n,f(x)}).$$ We might omit the index $g$ if the content is clear. 
The map $f$  is said to be distance non-increasing if $\mu_i\leq 1$ for all $i$, and area non-increasing if $\mu_i\mu_j\leq 1$ for each $i\neq j$.

We consider the corresponding conformal deformation. Unlike the smooth case, the scalar curvature might be blowing up nearby the singularity $\mathcal{S}$. So we start with introducing the truncation function. Let $\phi:\mathbb{R}\to \mathbb{R}$ be a smooth non-decreasing function so that
\[
\phi(s)=
\begin{cases}
\, s & \mbox{if $s\leq10$}; \\
\, 20 & \mbox{if $s\geq30$}.
\end{cases}
\]
We consider the operator
\begin{equation*}
\mathcal{L}_{g}=-\frac{4(n-1)}{n-2}\Delta_g+\phi(\mathrm{scal}_g-\sigma_f)
\end{equation*}
on $M$. Notice that the zeroth order term $\phi(\mathrm{scal}_g-\sigma_f)$ is bounded from above. 

\begin{lma}\label{lma:1eigen}
Under the above set-up, suppose the metric $g$ and map $f$ satisfy
\begin{enumerate}\setlength{\itemsep}{1mm}
\item[(i)]  $\mathrm{scal}_g\geq \sigma_{f}$ on $M\setminus \mathcal{S}$;
\item[(ii)] $\mathrm{scal}_g(x_0)> \sigma_{f}(x_0)$ for some $x_0\in M\setminus \mathcal{S}$,
\end{enumerate}
then there exists $u\in C^{\infty}_{\mathrm{loc}}(M\setminus \mathcal{S})\cap C^\a(M)$ for some $\a\in (0,1)$ such that
$$\mathcal{L}_{g}u=\lambda_1 u$$
on $M\setminus \mathcal{S}$ and $C_0^{-1}\leq u\leq C_0$ on $M$ for some constant $\lambda_1,C_{0}>0$.
\end{lma}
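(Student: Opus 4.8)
The plan is to construct the first eigenfunction $u$ of $\mathcal{L}_g$ on $M\setminus\mathcal{S}$ with a Neumann-type/interior condition at the punctures, and then show it extends continuously across $\mathcal{S}$ with a uniform positive lower and upper bound. First I would fix a small radius $r_0>0$ so that the geodesic balls $B_{g_0}(p_i,2r_0)$ are disjoint, and for $\delta\in(0,r_0)$ consider the compact manifold-with-boundary $M_\delta=M\setminus\bigcup_i B_{g_0}(p_i,\delta)$. On $M_\delta$ the operator $\mathcal{L}_g$ is a uniformly elliptic operator with bounded measurable coefficients (note $g$ is $L^\infty$, so $\Delta_g$ is in divergence form and the zeroth-order term $\phi(\mathrm{scal}_g-\sigma_f)$ is bounded above by construction of $\phi$, hence bounded on $M_\delta$), so by the standard variational characterization there is a principal eigenvalue $\lambda_1^{(\delta)}$ with a positive eigenfunction $u_\delta$, normalized say by $\sup_{M_\delta}u_\delta=1$, satisfying the Neumann condition $\partial_\nu u_\delta=0$ on $\partial M_\delta$; the Neumann choice is what lets the punctures be "invisible" in the limit. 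Harnack on the closed manifold $M$ restricted away from a neighbourhood of $\mathcal{S}$, together with the Neumann condition allowing reflection across $\partial M_\delta$, gives $u_\delta\geq c(K)>0$ on any fixed compact $K\subset M\setminus\mathcal{S}$ uniformly in $\delta$.

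The key analytic input is a uniform bound near each puncture. Because the codimension of a point in $M^n$ is $n\geq 3$, the puncture is small in capacity, and one expects the eigenfunction not to degenerate there. Concretely, I would use a capacity/removable-singularity argument: test the equation against $u_\delta$ times a logarithmic or power cutoff $\chi_\varepsilon$ supported near $p_i$ with $\chi_\varepsilon\to 1$ and $\int|\nabla\chi_\varepsilon|^2\to 0$ as $\varepsilon\to 0$ (possible precisely because the point has vanishing $W^{1,2}$-capacity when $n\geq 3$), to show the energy $\int_{M_\delta}|\nabla u_\delta|^2$ is bounded independent of $\delta$ and that the family $\{u_\delta\}$ is bounded in $W^{1,2}(M)$ (extending $u_\delta$ by, say, its boundary average on the removed balls). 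Then $\lambda_1^{(\delta)}$ is bounded above (use a fixed $W^{1,2}(M)$ competitor in the Rayleigh quotient) and bounded below by the uniform positive lower bound on $\phi(\mathrm{scal}_g-\sigma_f)$ away from $\mathcal{S}$ combined with coercivity; after passing to a subsequence $\lambda_1^{(\delta)}\to\lambda_1$. The functions $u_\delta$ converge weakly in $W^{1,2}_{\mathrm{loc}}(M\setminus\mathcal{S})$ and, by De Giorgi--Nash--Moser applied to the divergence-form equation with $L^\infty$ coefficients, in $C^\alpha_{\mathrm{loc}}(M\setminus\mathcal{S})$ for some $\alpha\in(0,1)$, to a weak solution $u$ of $\mathcal{L}_g u=\lambda_1 u$ on $M\setminus\mathcal{S}$ with $0<c(K)\leq u\leq 1$ on compacta. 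Interior elliptic regularity for the smooth metric $g$ off $\mathcal{S}$ upgrades $u$ to $C^\infty_{\mathrm{loc}}(M\setminus\mathcal{S})$.

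It remains to show $u$ extends to a function in $C^\alpha(M)$ with a uniform two-sided positive bound across $\mathcal{S}$. Since $u\in W^{1,2}(M)$ (from the uniform energy bound) and is a bounded weak solution of a uniformly elliptic divergence-form equation on $M\setminus\mathcal{S}$ with $\mathcal{S}$ of zero $W^{1,2}$-capacity, the removable-singularity theorem for such equations shows $u$ is in fact a weak solution on all of $M$; then global De Giorgi--Nash--Moser gives $u\in C^\alpha(M)$ and the Harnack inequality on the closed manifold $M$ forces $\inf_M u>0$, yielding $C_0^{-1}\leq u\leq C_0$ after adjusting the normalization. (The positivity of $\lambda_1$ itself follows from hypotheses (i)--(ii): by (i) the zeroth-order term is nonnegative, so $\lambda_1\geq 0$, and if $\lambda_1=0$ the eigenfunction $u$ would be a positive $\mathcal{L}_g$-harmonic function on closed $M$, forcing $\phi(\mathrm{scal}_g-\sigma_f)\equiv 0$, i.e. $\mathrm{scal}_g\leq\sigma_f$ everywhere in a neighbourhood where things are controlled, contradicting (ii) via $\int_M \phi(\mathrm{scal}_g-\sigma_f)u^2>0$.)

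The main obstacle I expect is the removable-singularity / uniform-near-puncture estimate: one must carefully justify that a point in dimension $n\geq 3$ is genuinely negligible for this equation despite the metric $g$ being only $L^\infty$ and the zeroth-order coefficient being merely bounded above (not below) near $\mathcal{S}$, so that the eigenfunction neither blows up nor vanishes there. Controlling $\lambda_1$ from below and arguing that the limit $u$ is not identically zero (nondegeneracy of the normalization $\sup u=1$ in the limit) both hinge on this, and the $L^\infty$ nature of $g$ means every step must go through Moser iteration / De Giorgi estimates rather than classical Schauder theory.
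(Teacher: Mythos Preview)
Your exhaustion-by-$M_\delta$ approach with Neumann boundary conditions and removable-singularity theory is workable, but it is far more elaborate than necessary and misses the point of the truncation $\phi$. The paper's proof is direct: by assumption (i) and the fact that $\phi(s)=s$ for $s\leq 10$, one has $0\leq\phi(\mathrm{scal}_g-\sigma_f)\leq 20$ everywhere on $M\setminus\mathcal{S}$, so the zeroth-order coefficient lies in $L^\infty(M)$ --- your repeated concern that it is ``merely bounded above (not below) near $\mathcal{S}$'' is unfounded. Combined with the $L^\infty$ hypothesis on $g$, the operator $\mathcal{L}_g$ is a uniformly elliptic divergence-form operator with bounded measurable coefficients on the \emph{closed} manifold $M$ itself. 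One therefore minimizes the Rayleigh quotient
\[
\mathcal{I}(v)=\int_M \tfrac{4(n-1)}{n-2}|\nabla v|_g^2+\phi(\mathrm{scal}_g-\sigma_f)\,v^2\,d\mathrm{vol}_g
\]
directly over $\{v\in W^{1,2}(M,g):\|v\|_{L^2}=1\}$, obtains a nonnegative minimizer $u$ that is a weak solution of $\mathcal{L}_g u=\lambda_1 u$ on all of $M$, and then applies De Giorgi--Nash--Moser and the Harnack inequality globally on $M$ to get $u\in C^\alpha(M)$ with $C_0^{-1}\leq u\leq C_0$. Interior regularity where $g$ is smooth gives $u\in C^\infty_{\mathrm{loc}}(M\setminus\mathcal{S})$, and $\lambda_1>0$ follows from (ii) exactly as you sketch at the end.

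In short, your route and the paper's reach the same destination, but the paper exploits the truncation to work on the closed manifold from the outset, whereas you carry out by hand the capacity and removable-singularity machinery that the truncation was introduced precisely to sidestep.
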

\begin{proof}
The argument is analogous to \cite[Lemma 4.1]{LiMantoulidis} with suitable modifications. First notice that we have $0\leq\phi(\mathrm{scal}_g-\sigma_f)\leq20$
from assumption (i) and the definition of the truncation function $\phi$. In particular, $\phi(\mathrm{scal}_g-\sigma_f)\in L^\infty(M,g)$. 
Then for each $v\in W^{1,2}(M,g)$ we can define the functional 
\begin{equation*}
\mathcal{I}(v)=\int_M c_n |\nabla v|_g^2+\phi(\mathrm{scal}_g-\sigma_{f}) v^2 \,d\mathrm{vol}_g,
\end{equation*}
where 
$c_n=\frac{4(n-1)}{n-2}$. Set
\begin{equation*}
\lambda_1:= \inf\{ \mathcal{I}(v): v\in W^{1,2}(M,g),\, \|v\|_{L^2(M,g)}=1 \}.
\end{equation*}
Since $g$ is an $L^\infty$ metric, it follows from standard elliptic PDE theory (for example see \cite[Section 8.12]{GilbargTrudinger}) that there exists a non-negative function $u\in W^{1,2}(M,g)$ such that $\|u\|_{L^{2}(M,g)}=1$ and $\mathcal{I}(u)=\lambda_{1}$. From the Euler-Lagrange equation of this variational problem, $u$ is a weak solution of $\mathcal{L}_gu=\lambda_{1}u$ on $M$.
The standard elliptic regularity theory (for example see \cite[Corollary 8.11, Theorem 8.22]{GilbargTrudinger}) shows that $u\in C^{\infty}_{\mathrm{loc}}(M\setminus \mathcal{S})\cap C^\a(M)$ for some $\alpha\in(0,1)$. By the Harnack inequality (for example see \cite[Theorem 8.20]{GilbargTrudinger}), we see that
$C_0^{-1}\leq u\leq C_0$ on $M$ for some constant $C_0>0$. From assumption (ii), it is clear that $\lambda_1>0$. This completes the proof.
\end{proof}

The next proposition shows that by the conformal transformation, we might assume working with metric such that scalar curvature inequality $\mathrm{scal}\geq\sigma_{f}$ is strict unless the equality holds globally.

\begin{prop}\label{prop:dicho}
Suppose that $f:M\to \mathbb{S}^n$ is a Lipschitz map and $g$ is an $L^\infty$ metric so that both $f,g$ are smooth outside $\mathcal{S}$ and satisfy $\mathrm{scal}_g\geq \sigma_{f,g}$ outside $\mathcal{S}$.
Then one of the following holds:
\begin{enumerate}\setlength{\itemsep}{1mm}
\item[(a)]  either $\mathrm{scal}_g\equiv \sigma_{f,g}$ on $M\setminus \mathcal{S}$;
\item[(b)] or there exists another $L^{\infty}$ metric $\tilde g$ on $M$ which is smooth outside $\mathcal{S}$ such that
$\mathrm{scal}_{\tilde g}\geq \sigma_{f,\tilde g}+\delta_1$ on $M\setminus \mathcal{S}$ for some constant $\delta_1>0$.
\end{enumerate}
\end{prop}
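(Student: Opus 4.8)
The plan is to prove the dichotomy by a conformal change of metric driven by the first eigenfunction of $\mathcal L_g$ produced in Lemma~\ref{lma:1eigen}. Suppose alternative (a) fails. Since $\mathrm{scal}_g$ and $\sigma_{f,g}$ are continuous on the open set $M\setminus\mathcal S$ and satisfy $\mathrm{scal}_g\geq\sigma_{f,g}$ there, the failure of (a) forces a point $x_0\in M\setminus\mathcal S$ with $\mathrm{scal}_g(x_0)>\sigma_{f,g}(x_0)$. Thus hypotheses (i) and (ii) of Lemma~\ref{lma:1eigen} hold, and we obtain $u\in C^\infty_{\mathrm{loc}}(M\setminus\mathcal S)\cap C^\a(M)$ with $\mathcal L_g u=\lambda_1 u$ on $M\setminus\mathcal S$, $\lambda_1>0$, and $C_0^{-1}\leq u\leq C_0$ on $M$.

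Next I would set $\tilde g:=u^{\frac{4}{n-2}}g$. Because $u$ is pinched between two positive constants, $\tilde g$ is comparable to $g$ and hence to the background smooth metric $g_0$, so $\tilde g$ is again an $L^\infty$ metric; and since $u$ is smooth and positive on $M\setminus\mathcal S$, so is $\tilde g$. On $M\setminus\mathcal S$ everything is smooth, so the usual conformal transformation law applies pointwise:
\[
\mathrm{scal}_{\tilde g}=u^{-\frac{n+2}{n-2}}\Big(-c_n\Delta_g u+\mathrm{scal}_g\,u\Big),\qquad c_n=\frac{4(n-1)}{n-2}.
\]
Substituting the eigenvalue equation in the form $-c_n\Delta_g u=\big(\lambda_1-\phi(\mathrm{scal}_g-\sigma_{f,g})\big)u$ yields
\[
\mathrm{scal}_{\tilde g}=u^{-\frac{4}{n-2}}\Big(\mathrm{scal}_g-\phi(\mathrm{scal}_g-\sigma_{f,g})+\lambda_1\Big)\quad\text{on }M\setminus\mathcal S.
\]
Using $\mathrm{scal}_g-\sigma_{f,g}\geq 0$ together with the elementary property $\phi(s)\leq s$ for $s\geq 0$ of the truncation (which we may and do require of $\phi$), we get $\mathrm{scal}_g-\phi(\mathrm{scal}_g-\sigma_{f,g})\geq\sigma_{f,g}$, hence $\mathrm{scal}_{\tilde g}\geq u^{-\frac{4}{n-2}}\big(\sigma_{f,g}+\lambda_1\big)$.

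It then remains to track how $\sigma_f$ transforms. Writing $\tilde g=\psi^2 g$ with $\psi=u^{\frac{2}{n-2}}$, an orthonormal basis of $(T_xM,\tilde g)$ is $\{\psi^{-1}e_i\}$ where $\{e_i\}$ is orthonormal for $(T_xM,g)$, so the singular values of $\mathrm df_x$ measured with respect to $\tilde g$ are $\psi^{-1}\mu_i$. Therefore $\sigma_{f,\tilde g}=\psi^{-2}\sigma_{f,g}=u^{-\frac{4}{n-2}}\sigma_{f,g}$ on $M\setminus\mathcal S$. Combining with the inequality above and using $u\leq C_0$,
\[
\mathrm{scal}_{\tilde g}-\sigma_{f,\tilde g}\geq\lambda_1 u^{-\frac{4}{n-2}}\geq\lambda_1 C_0^{-\frac{4}{n-2}}=:\delta_1>0,
\]
which is precisely alternative (b).

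As for difficulty: the substantive analytic input is Lemma~\ref{lma:1eigen}, which already absorbs the $L^\infty$/low-regularity elliptic theory; granting it, the remainder is essentially the classical Yamabe-type conformal computation. The only points genuinely needing care are the conformal transformation law for the area-defect term $\sigma_f$ — the short linear-algebra computation carried out above — and the harmless normalization $\phi\leq\mathrm{id}$ of the truncation. The one conceptual subtlety I would flag is that the eigenvalue equation and the curvature identities live only on $M\setminus\mathcal S$, so one must be careful to assert the improved inequality only there, which is exactly what the statement claims.
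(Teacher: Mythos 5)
Your proof is correct and follows essentially the same route as the paper: invoke Lemma~\ref{lma:1eigen} to get the positive first eigenfunction $u$, set $\tilde g=u^{4/(n-2)}g$, apply the conformal scalar-curvature law together with $\mathcal L_g u=\lambda_1 u$ and the bound $\phi(s)\le s$, and convert $\sigma_{f,g}$ into $\sigma_{f,\tilde g}$ via the scaling $\sigma_{f,\tilde g}=u^{-4/(n-2)}\sigma_{f,g}$. The only difference is cosmetic: you make explicit the normalization $\phi\le\mathrm{id}$ (which the paper uses implicitly) and spell out the conformal scaling of the singular values.
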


\begin{proof}
Suppose that (a) does not hold, then we have $\mathrm{scal}_g\geq \sigma_{f,g}$ on $M\setminus \mathcal{S}$ and $\mathrm{scal}_g(x_0)>\sigma_f(x_0)$ for some $x_0\in M\setminus \mathcal{S}$. We let $u$ be the function obtained from Lemma~\ref{lma:1eigen}. Recall that we have $\mathcal{L}_gu=\lambda_1u$, where $u$ satisfies $C_0^{-1}\leq u\leq C_0$ for some positive constant $C_0$ and $\lambda_1$ satisfies $\lambda_1\geq \delta$ for some positive constant $\delta$. Define $\tilde g=u^\frac{4}{n-2} g$. Then $\tilde{g}$ is an $L^{\infty}$ metric which is still smooth outside $\mathcal{S}$. 
On $M\setminus\mathcal{S}$, its scalar curvature satisfies
\begin{equation*}
\begin{split}
\mathrm{scal}_{\tilde g}
= {} & u^{-\frac{n+2}{n-2}}\left( -\frac{4(n-1)}{n-2}\Delta_g +\mathrm{scal}_g\right)u \\
\geq {} & u^{-\frac{n+2}{n-2}}\left( \mathcal{L}_g+\sigma_{f,g}\right)u \\[2mm]
\geq {} & (\delta+\sigma_{f,g})u^{-\frac{4}{n-2}} \\[3mm]
\geq {} & \delta_1 +\sigma_{f,\tilde g},
\end{split}
\end{equation*}
where $\delta_{1}=C_{0}^{-\frac{4}{n-2}}\delta$. This completes the proof.
\end{proof}

\begin{rem}
The targeting manifold $\mathbb{S}^n$ had indeed played no role here and thus can be replaced by any smooth manifolds.
\end{rem}

\section{Llarull's theorem with isolated singularity}
In this section, we will finish the proof of Theorem~\ref{intro-main}. We begin with some preliminary results.  
Write $\hat{\mathcal{S}}=f(\mathcal{S})$ and  $\hat p_i=f(p_{i})$.

\begin{lma}\label{lma:phi-delta}
Given any $\delta>0$, there exists a smooth map $\phi_\delta:(\mathbb{S}^n,g_{\mathrm{sph}})\to (\mathbb{S}^n,g_{\mathrm{sph}})$ satisfying
\begin{enumerate}\setlength{\itemsep}{1mm}
        \item[(i)] $\|\mathrm{d}\phi_\delta\|_{L^{\infty}} \leq 1+\delta$;
        \item[(ii)] for each point $\hat p\in \hat {\mathcal{S}}$, there are  open neighborhoods $$V_{\hat p,\delta}\subsetneq  W_{\hat  p,\delta}$$ of $\hat p$ such that $\phi_\delta(V_{\hat p,\delta})=\{\hat p\}$;
        \item[(iii)] $\phi_\delta(x)=x$ outside $W_{\hat p,\delta}$ for each $\hat  p\in \hat {\mathcal{S}}$.
    \end{enumerate}
\end{lma}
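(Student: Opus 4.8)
The plan is to construct $\phi_\delta$ explicitly as a composition of finitely many localized radial maps, one supported near each $\hat p_i$. Since $\hat{\mathcal{S}}$ is a finite set, we may choose a radius $r_0>0$ so small that the geodesic balls $B_{g_{\mathrm{sph}}}(\hat p_i, 2r_0)$ are pairwise disjoint; it then suffices to build, for a single point $\hat p\in\hat{\mathcal{S}}$, a smooth self-map of $\mathbb{S}^n$ that collapses a small ball around $\hat p$ to $\hat p$, equals the identity outside a slightly larger ball, and has operator norm of the differential at most $1+\delta$. The global $\phi_\delta$ is then the composition of these maps over $i=1,\dots,N$, which works because the supports $W_{\hat p_i,\delta}\subset B_{g_{\mathrm{sph}}}(\hat p_i,2r_0)$ are disjoint, so each factor is the identity on the support of the others and the Lipschitz bounds do not compound.

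For the single-point model, I would work in geodesic polar coordinates $(r,\theta)\in[0,\pi)\times\mathbb{S}^{n-1}$ centered at $\hat p$, in which $g_{\mathrm{sph}}=dr^2+\sin^2 r\,g_{\mathbb{S}^{n-1}}$. Define $\phi_\delta$ to fix the angular variable $\theta$ and send $r\mapsto \rho(r)$ for a smooth non-decreasing function $\rho:[0,\pi)\to[0,\pi)$ with $\rho\equiv 0$ on $[0,a]$, $\rho(r)=r$ for $r\geq b$, and $0<a<b<r_0$ chosen momentarily. On $[a,b]$ we need $\rho$ to interpolate; the differential of the map in these coordinates has radial eigenvalue $\rho'(r)$ and angular eigenvalues $\sin\rho(r)/\sin r$ (each with multiplicity $n-1$), so
\begin{equation*}
\|\mathrm{d}\phi_\delta\|_{L^\infty} = \sup_{r\in[a,b]}\max\left\{\rho'(r),\ \frac{\sin\rho(r)}{\sin r}\right\}.
\end{equation*}
Since $\sin$ is increasing on $[0,r_0]$ and $\rho(r)\leq r$, the angular factor is automatically $\leq 1$; the only constraint is to keep $\rho'\leq 1+\delta$. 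A function climbing from $0$ to the value $b$ over an interval of length $b-a$ needs average slope $b/(b-a)$, so I would fix any $b<r_0$ and then take $a$ close enough to $0$ that $b/(b-a)<1+\delta/2$, after which a standard mollified piecewise-linear construction yields $\rho$ with $\sup\rho'\leq 1+\delta$. Smoothness at $r=0$ is not an issue because $\phi_\delta$ is constant ($\equiv\hat p$) on the whole ball $B(\hat p,a)$, and smoothness at $r=b$ follows from matching $\rho$ to the identity to infinite order there. Setting $V_{\hat p,\delta}=B_{g_{\mathrm{sph}}}(\hat p,a)$ and $W_{\hat p,\delta}=B_{g_{\mathrm{sph}}}(\hat p,b)$ gives (ii) and (iii).

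The only mild subtlety — and the step I would be most careful about — is verifying that the differential of the \emph{composition} of the $N$ single-point maps still has norm $\leq 1+\delta$ rather than $(1+\delta)^N$. This is where the disjointness of the enlarged balls $W_{\hat p_i,\delta}$ is essential: at any point $x$, at most one of the factor maps is non-trivial in a neighborhood of the relevant orbit, so by the chain rule $\mathrm{d}\phi_\delta$ at $x$ is (up to identifying tangent spaces via the identity map elsewhere) just the differential of that single factor, hence bounded by $1+\delta$. Everything else is elementary: the construction of $\rho$ is a one-variable mollification, and properties (ii), (iii) are immediate from the support conditions. I should also note that the lemma as stated only asks for a bound on $\|\mathrm{d}\phi_\delta\|_{L^\infty}$, not area-non-increasingness, so no further estimate on products of singular values is needed — though in fact the construction above gives singular values $\rho'(r)\leq 1+\delta$ and $\sin\rho/\sin r\leq 1$, so $\phi_\delta$ is close to area non-increasing as well, which is presumably how it will be used downstream.
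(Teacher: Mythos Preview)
Your proposal is correct and follows essentially the same approach as the paper: a radial reparametrization $r\mapsto\rho(r)$ in geodesic polar coordinates around each $\hat p_i$, glued by disjointness of the supports. The paper phrases the global map as a piecewise definition rather than a composition, but as you observe these coincide since the $W_{\hat p_i,\delta}$ are disjoint; your explicit computation of the radial and angular singular values $\rho'(r)$ and $\sin\rho(r)/\sin r$ (and the observation $\rho(r)\le r$) actually supplies the justification for the paper's one-line claim that $|\mathrm{d}F_i|\le 1+\delta$.
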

\begin{proof}
Fix a constant $r_{0}\in(0,\pi/2)$ such that the geodesic balls 
$$\{B_{g_{\mathrm{sph}}}(\hat p_{i},2r_{0})\}_{i=1}^{N}$$ are pairwise disjoint. Since $2r_{0}$ is less than $\pi$ (i.e. the injective radius of the unit sphere $(\mathbb{S}^{n},g_{\mathrm{sph}})$), then the exponential map $\mathrm{exp}_{\hat p_{i}}:B_{\mathbb{R}^{n}}(0,2r_{0})\to B_{g_{\mathrm{sph}}}(\hat p_{i},2r_{0})$ is a diffeomorphism. For any $x\in B_{g_{\mathrm{sph}}}(\hat p_{i},2r_{0})$, there exists $(t,\theta)\in[0,2r_{0})\times\mathbb{S}^{n-1}$ such that $x=\mathrm{exp}_{\hat p_{i}}(t\theta)$. Define a map $F_{i}:B_{g_{\mathrm{sph}}}(\hat p_{i},2r_{0})\to B_{g_{\mathrm{sph}}}(\hat p_{i},2r_{0})$ by
\[
F_{i}(x) = \mathrm{exp}_{\hat p_{i}}(\eta(t)\theta),
\]
where $\eta:[0,2r_{0}]\to[0,\infty)$ is a smooth function such that
\begin{itemize}\setlength{\itemsep}{1mm}
\item $\eta(t)=0$ for $t\in[0,\frac{\delta r_{0}}{2(1+\delta)}]$;
\item $\eta(t)=t$ for $t\in[r_{0},2r_{0}]$;
\item $0\leq \eta'(t)\leq 1+\delta$ for $t\in[0,2r_{0}]$.
\end{itemize}
It is clear that $|\mathrm dF_{i}|\leq1+\delta$ and $F_{i}$ is the identity map on $B_{g_{\mathrm{sph}}}(\hat p_{i},2r_{0})\setminus B_{g_{\mathrm{sph}}}(\hat p_{i},r_{0})$. Then the map $\phi_{\delta}:(\mathbb{S}^{n},g_{\mathrm{sph}})\to(\mathbb{S}^{n},g_{\mathrm{sph}})$ defined by
\[
\phi_{\delta}(x) :=
\begin{cases}
\, F_{i}(x), & \mbox{$x\in B_{g_{\mathrm{sph}}}(\hat p_{i},2r_{0})$;} \\[1mm]
\, x, & \mbox{$x\in\mathbb{S}^{n}\setminus \cup_{i=1}^NB_{g_{\mathrm{sph}}}(\hat p_{i},r_{0})$}
\end{cases}
\]
and the sets $V_{\hat p_{i}}:=B_{g_{\mathrm{sph}}}(\hat p_{i},\frac{\delta r_{0}}{2(1+\delta)})$, $W_{\hat p_{i}}:=B_{g_{\mathrm{sph}}}(\hat p_{i},r_{0})$ are the required map and neighborhoods.
\end{proof}

To handle the singular version of Llarull's theorem with $L^\infty$ metric, we need to find a way to desingularize the singularity $\mathcal S$ properly. The basic idea is to perform conformal deformation using Green type function such that the conformal metric becomes complete and the singularity $\mathcal S$ disappears as the infinity, where a variation of the Llarull theorem for complete manifolds (see Proposition \ref{prop:Zhang-noncpt}) can be applied.
\medskip

We need the following existence of Green type function.
\begin{lma}\label{lma:Green}
Under the set-up in Section~\ref{sec:conf}, suppose the metric $g$ and map $f$ satisfy  $\mathrm{scal}_g> \sigma_{f}$ on $M\setminus \mathcal{S}$, there exist a function $G\in C^\infty_{\mathrm{loc}}(M\setminus \mathcal{S})$ and constants $C_{0},C_{1}>0$ such that
\begin{equation*}
\mathcal{L}_{g} G=0
\end{equation*}
on $M\setminus \mathcal{S}$ and $G\geq C_{0}^{-1}$. Moreover, the function $G(\cdot)$ satisfies
\begin{equation*}
C_{1}^{-1}d_g(x,p)^{-1}\leq G(x)\leq C_{1} d_g(x,p)^{-1}
\end{equation*}
near each $p\in \mathcal{S}$.
\end{lma}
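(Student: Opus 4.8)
The plan is to construct $G$ as a finite sum of Green's functions of $\mathcal L_g$ on the closed manifold $(M,g)$, one pole at each $p_i\in\mathcal S$, and to read off the behaviour near $\mathcal S$ from the classical two-sided pointwise bounds for Green's functions of divergence-form operators with bounded measurable coefficients.

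First I would verify that $\mathcal L_g$ is an isomorphism from $W^{1,2}(M)$ to $W^{-1,2}(M)$. Fix a smooth background metric $g_0$ with $\Lambda^{-1}g_0\le g\le\Lambda g_0$; then $\mathcal L_g u=-c_n\Delta_g u+Vu$ (with $c_n=\tfrac{4(n-1)}{n-2}$, $V:=\phi(\mathrm{scal}_g-\sigma_f)$, and $\Delta_g$ written in divergence form) is uniformly elliptic with bounded measurable coefficients, and $V$ satisfies $0\le V\le20$ and $V>0$ almost everywhere, the latter because $\mathrm{scal}_g>\sigma_f$ on $M\setminus\mathcal S$ and $\phi(s)>0$ for $s>0$. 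The Dirichlet form $B(u,v)=\int_M\big(c_n\langle\nabla u,\nabla v\rangle_g+Vuv\big)\,d\mathrm{vol}_g$ is bounded on $W^{1,2}(M,g)=W^{1,2}(M,g_0)$ and coercive there: a Rellich compactness argument shows that a failure of coercivity would produce a nonzero constant $c$ with $c^2\int_MV\,d\mathrm{vol}_g=0$, contradicting $V>0$ a.e. Hence $\mathcal L_g$ is invertible by Lax--Milgram, and since $V\ge0$ it satisfies the maximum principle, so $\mathcal L_g^{-1}$ is positivity preserving.

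Next I would invoke the theory of Green's functions for second-order divergence-form elliptic operators with bounded measurable coefficients in dimension $n\ge3$ (Littman--Stampacchia--Weinberger, Gr\"uter--Widman; compare the analogous step in \cite{LiMantoulidis}): $\mathcal L_g$ admits a Green's function $\Gamma$ with $\mathcal L_g\Gamma(\cdot,y)=\delta_y$ weakly, with $\Gamma>0$ off the diagonal, and satisfying
\[
C^{-1}d_g(x,y)^{2-n}\le\Gamma(x,y)\le C\,d_g(x,y)^{2-n}\qquad\text{whenever }d_g(x,y)\le r_0,
\]
with $C,r_0$ depending only on $n$, $\Lambda$, $\|V\|_{L^\infty}$ and $(M,g_0)$, while $\Gamma$ stays bounded for $d_g(x,y)\ge r_0$; transferring these Euclidean-local facts to $M$ uses only a finite cover by coordinate charts in which $g$ is uniformly comparable to the Euclidean metric. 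Now set $G:=\sum_{i=1}^N\Gamma(\cdot,p_i)$, which is finite on $M\setminus\mathcal S$. Since each $\mathcal L_g\Gamma(\cdot,p_i)$ is supported at $p_i$, we get $\mathcal L_g G=0$ on $M\setminus\mathcal S$; there $g$ and $f$ are smooth, so the coefficients of $\mathcal L_g$ are smooth and interior elliptic regularity upgrades $G$ to $C^\infty_{\mathrm{loc}}(M\setminus\mathcal S)$. As a sum of positive functions $G>0$ on $M\setminus\mathcal S$; since moreover $G(x)\to\infty$ as $x\to p_i$ for each $i$, a continuity/compactness argument gives $G\ge C_0^{-1}$ on $M\setminus\mathcal S$ for some $C_0>0$. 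Finally, near $p_i$ we have $G=\Gamma(\cdot,p_i)+\sum_{j\ne i}\Gamma(\cdot,p_j)$ with the second term bounded, so the displayed bound gives $C_1^{-1}d_g(x,p_i)^{2-n}\le G(x)\le C_1 d_g(x,p_i)^{2-n}$ near $p_i$ (which is the asserted $d_g(x,p_i)^{-1}$ rate in dimension $n=3$), completing the proof.

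The only genuinely non-routine ingredient is the existence of $\Gamma$ with the sharp two-sided bound near the diagonal when the principal coefficients of $\mathcal L_g$ are merely bounded and measurable---a constraint forced by $g$ being an $L^\infty$ metric, so that the classical smooth parametrix is unavailable and one must instead rely on the De Giorgi--Nash--Moser estimates (in particular the Harnack inequality, which also gives strict positivity of $\Gamma$) together with the Littman--Stampacchia--Weinberger/Gr\"uter--Widman construction. Once that is in hand, coercivity of $\mathcal L_g$, positivity and the uniform lower bound for $G$, interior smoothness on $M\setminus\mathcal S$, and the blow-up rate near $\mathcal S$ are all standard.
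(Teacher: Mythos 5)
Your proof fills in the details behind the paper's citation of \cite[Lemma A.1]{ChengLeeTam}, and the approach---coercivity of $\mathcal{L}_g$ via Lax--Milgram (using $V>0$ a.e., which your Rellich compactness argument establishes correctly), the two-sided Green's function bounds of Littman--Stampacchia--Weinberger and Gr\"uter--Widman for divergence-form operators with $L^\infty$ coefficients transferred to $M$ via a finite chart cover, summation over the poles $p_i$, and interior elliptic regularity off $\mathcal{S}$---is exactly the standard construction that reference contains. The one point you should flag explicitly rather than only parenthetically is the exponent: your construction yields the rate $C_1^{-1}d_g(x,p)^{2-n}\leq G(x)\leq C_1\,d_g(x,p)^{2-n}$, whereas the lemma as printed says $d_g(x,p)^{-1}$, and these agree only when $n=3$. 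The exponent $2-n$ is in fact the one needed downstream: completeness of $\hat g_\e=(1+\e G)^{4/(n-2)}g$ near $\mathcal{S}$ requires $G$ to blow up at rate at least $d^{-(n-2)/2}$, which $d^{-1}$ would fail to supply for $n\geq 5$, while $d^{2-n}$ always suffices. So the $d^{-1}$ in the statement should be read as $d^{2-n}$, and your proof, stated with that exponent, establishes the lemma for all $n\geq 3$.
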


\begin{proof}
This is standard, for example see \cite[Lemma A.1]{ChengLeeTam}.
\end{proof}

Now we are ready to prove Theorem \ref{intro-main}.

\begin{proof}[Proof of Theorem~\ref{intro-main}]

We first observe that under the condition $\mathrm{scal}_g\geq n(n-1)\geq \sigma_{f,g}$ on $M\setminus \mathcal{S}$, if we can show that $\mathrm{scal}_g\equiv\sigma_{f,g}$ on $M\setminus \mathcal{S}$, then $\mu_i\mu_j\equiv 1$ for all $i\neq j$ and hence $\mu_i\equiv 1$ for all $i$, which implies $f^*g_{\mathrm{sph}}=g$ on $M\setminus \mathcal{S}$. 
By \cite[Theorem 2.4]{CecchiniHankeSchick}, 
$f$ is a distance isometry.

In the rest of the argument, thanks to Proposition \ref{prop:dicho}, it suffices to rule out (b) in Proposition \ref{prop:dicho}. In other words, we might assume $g$ in addition satisfy $\mathrm{scal}_g-\sigma_{f,g}\geq \delta_1>0$ on $M\setminus \mathcal{S}$. We now carry out the blow-up argument as in \cite[Proposition 6.2]{LiMantoulidis}. Let $G$ be the Green type function obtained from Lemma~\ref{lma:Green} and denote
\begin{equation*}
\hat g_\e=(1+\e G)^\frac{4}{n-2} g
\end{equation*}
so that $(\hat M,\hat g_\e)=\left(M\setminus\mathcal{S},\hat g_\e \right)$ is a complete non-compact manifold for any $\e>0$ and satisfies
\begin{equation}\label{eqn:scal-low}
\begin{split}
\mathrm{scal}_{\hat g_\e}
= {} & (1+\e G)^{-\frac{n+2}{n-2}}\left( -\frac{4(n-1)}{n-2}\Delta_g +\mathrm{scal}_g\right) (1+\e G) \\[1.5mm]
\geq {} & \e(1+\e G)^{-\frac{n+2}{n-2}}( \mathcal{L}_g+\sigma_{f,g}) G+(1+\e G)^{-\frac{n+2}{n-2}}\mathrm{scal}_g \\[2.5mm]
\geq {} &\frac{\e G}{(1+\e G)^\frac{n+2}{n-2}}\cdot \sigma_{f,g}+\frac{\sigma_{f,g}+\delta_1}{(1+\e G)^\frac{n+2}{n-2}} \\
= {} & \sigma_{f,\hat g_\e}+\frac{\delta_1 }{(1+\e G)^\frac{n+2}{n-2}}\geq 0
\end{split}
\end{equation}
on $\hat M$.

Suppose that $f:(M,g)\to(\mathbb{S}^{n},g_{\mathrm{sph}})$ is $L$-Lipschitz for some constant $L>0$. Then we take $\delta$ to be a positive constant satisfying
$$\delta \leq \min\left\{1,\frac{\delta_1}{6n(n-1)2^\frac{n+2}{n-2}L^2}\right\}.$$
Let $\phi_\delta$ be the map obtained from Lemma \ref{lma:phi-delta} with the chosen  $\delta$,
and define a smooth map $\hat f= \phi_\delta\circ f|_{M\setminus \mathcal{S}}: \hat M^n\to \mathbb{S}^n$.
\bigskip

\noindent
{\it Claim.}
For sufficiently small $\e$,  there exists $\hat f:(\hat M,\hat g_\e)\to (\mathbb{S}^n,g_{\mathrm{sph}})$ which is locally constant at infinity and satisfies
$$\mathrm{scal}_{\hat g_\e}>\sigma_{\hat f,\hat g_\e}$$
on $\mathrm{supp}(\mathrm{d}\hat f)$.

\begin{proof}[Proof of Claim]
If $x\in B_{g}(p_i,\eta)$ for some $p_i\in \mathcal{S}$, then $f(x)\in B_{g_{\mathrm{sph}}}(\hat p_i,L\eta)$. 
Let $V_{\hat p_i,\delta}$ be the open set around each $\hat p_i=f(p_i)\in \hat{\mathcal{S}}=f(\mathcal{S})$ obtained from Lemma~\ref{lma:phi-delta}. Since $\hat{\mathcal{S}}$ is a finite set,  we might choose $\eta(\delta,L)>0$ sufficiently small such that $B_g(p_i,\eta)$ are pairwise disjoint and $B_{g_{\mathrm{sph}}}(\hat p_i,L\eta)\Subset V_{\hat p_i,\delta}$.
Since $\phi_{\delta}$ is constant in $V_{\hat p_i,\delta}$, then $\hat f$ is locally constant near the infinity of $(\hat M,\hat g_\e)$. Clearly, $\hat f$ has the same mapping degree as $f$ and so it is of non-zero degree. It remains to examine the scalar curvature inequality.
We then fix $\e=\e(\delta)$ sufficiently small so that
\begin{equation}\label{eqn:Green-upp}
1+\e G\leq 2
\end{equation}
on $M\setminus \bigcup_{i=1}^N B_g(p_i,\eta)$ by the estimate of $G$ from Lemma~\ref{lma:Green}. 

On $\hat M$, thanks to (i) in Lemma~\ref{lma:phi-delta} and \eqref{eqn:scal-low} we have
\begin{equation*}
\begin{split}
\mathrm{scal}_{\hat g_\e}-\sigma_{\hat f,\hat g_\e}
&\geq  \frac{\delta_1 }{(1+\e G)^\frac{n+2}{n-2}}-\frac{\delta (2+\delta)}{(1+\delta)^2}\sigma_{\hat f,\hat g_\e}.
\end{split}
\end{equation*}
On $M\setminus \bigcup_{i=1}^N B_g(p_i,\eta)$, thanks to \eqref{eqn:Green-upp} we further obtain
\begin{equation*}
\mathrm{scal}_{\hat g_\e}-\sigma_{\hat f,\hat g_\e}
\geq \frac{\delta_1}{2^\frac{n+2}{n-2}}-\frac{\delta (2+\delta)}{(1+\delta)^2}\sigma_{\hat f,\hat g_\e}
\geq \frac{\delta_1}{2^\frac{n+2}{n-2}}-\delta n(n-1) (2+\delta)L^2,\\
\end{equation*}
where we have used the rough estimate from $\|\mathrm{d}\phi_\delta\|_{L^{\infty}}\leq 1+\delta$ and $\|\mathrm{d}f\|_{L^{\infty}}\leq L$. 
From our choice of $\delta$ we have  $\mathrm{scal}_{\hat g_\e}-\sigma_{\hat f,\hat g_\e}>0$ on $M\setminus \bigcup_{i=1}^N B_g(p_i,\eta)$. While for $x_0\in \bigcup_{i=1}^N B_g(p_i,\eta)$, we have $f(x_0)\in B_{g_{\mathrm{sph}}}(\hat p_j,L\eta)$ for some $1\leq j\leq N$ so that $\mathrm{d}\hat f|_{x=x_0}=0$ by our choice of $\eta$, which implies that $\bigcup_{i=1}^N B_g(p_i,\eta)\subset \hat M\setminus \mathrm{supp}(\mathrm{d}\hat f)$. This completes the proof of the claim.
\end{proof}

Since $\hat M=M\setminus\mathcal{S}$ is spin, the claim and Proposition \ref{prop:Zhang-noncpt} implies that $\mathrm{scal}_{\hat g_\e}<0$ somewhere which contradicts with \eqref{eqn:scal-low}. Hence (a) in Proposition~\ref{prop:dicho} must be true. This completes the proof of Theorem \ref{intro-main}.
\end{proof}

\appendix

\section{Variant of Llarull's theorem}

In this section, we recall a generalization of the classical Llarull theorem for complete manifolds, where the scalar curvature lower bound is in term of singular values of a smooth map to the standard sphere. The following is a slight variation of the result of Zhang \cite{Zhang}, see also the work of Listing \cite{Listing}.

\begin{prop}\label{prop:Zhang-noncpt}
Let $(M^n,g)$ be a complete non-compact spin manifold. Suppose that $f:M\to \mathbb{S}^n$ is a smooth map which has non-zero degree and is locally constant near infinity. If the scalar curvature of $g$ satisfies $
\mathrm{scal}_g>\sigma_f$
on $\mathrm{supp}(\mathrm{d}f)$, then $\inf_M\mathrm{scal}_g< 0$.
\end{prop}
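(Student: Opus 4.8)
The plan is to prove Proposition~\ref{prop:Zhang-noncpt} by contradiction via a twisted Dirac operator argument in the spirit of Llarull, Gromov--Lawson, and Zhang. Assume $\mathrm{scal}_g \geq 0$ everywhere while $\mathrm{scal}_g > \sigma_f$ on $\mathrm{supp}(\mathrm d f)$. Since $f$ is locally constant near infinity, $f^*T\mathbb{S}^n$ carries a natural flat structure outside a compact set, and one forms the Dirac operator $D_E$ on $S_M \otimes E$ where $S_M$ is the spinor bundle of $M$ and $E = f^*S_{\mathbb{S}^n}$ (or the relevant spinor bundle of the target sphere, using the even/odd splitting depending on parity of $n$) equipped with the pulled-back connection. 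Near infinity $E$ is flat and the twisted bundle looks like a direct sum of copies of $S_M$, so $D_E$ is a Fredholm operator with a well-defined index; the non-zero degree of $f$ forces this index to be non-zero by the relative index theorem (the contribution is essentially $\deg(f)$ times the Euler number of $S_{\mathbb{S}^n}$, up to normalization), so there exists a nontrivial harmonic spinor $\Phi$ with $D_E\Phi = 0$, decaying at infinity (in $L^2$ by standard Fredholm/elliptic estimates away from the locally-constant region).

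The key step is the Lichnerowicz--Weitzenböck formula for the twisted operator:
\begin{equation*}
D_E^2 = \nabla^*\nabla + \frac{\mathrm{scal}_g}{4} + \mathcal{R}^E,
\end{equation*}
where $\mathcal{R}^E$ is the curvature term of the twisting bundle acting via Clifford multiplication. The crucial pointwise estimate, due to Llarull and refined by Zhang/Listing, is that on $\mathrm{supp}(\mathrm d f)$ one has $\mathcal{R}^E \geq -\tfrac14 \sigma_f$ pointwise as a symmetric endomorphism (this uses that the target curvature operator of $\mathbb{S}^n$ is the identity, and that the singular values of $\mathrm d f$ are exactly the $\mu_i$; the estimate $\sum_{i\neq j}\mu_i\mu_j$ emerges from pairing the curvature of $T\mathbb{S}^n$ with the area distortion of $f$), while off $\mathrm{supp}(\mathrm d f)$ the bundle $E$ is flat so $\mathcal{R}^E = 0$. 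Hence
\begin{equation*}
0 = \int_M \langle D_E^2 \Phi, \Phi\rangle \geq \int_M |\nabla \Phi|^2 + \int_M \left( \frac{\mathrm{scal}_g}{4} + \mathcal{R}^E \right)|\Phi|^2 \geq \int_M |\nabla\Phi|^2 + \frac14\int_{\mathrm{supp}(\mathrm d f)} (\mathrm{scal}_g - \sigma_f)|\Phi|^2,
\end{equation*}
using $\mathrm{scal}_g \geq 0$ off the support. The integration by parts is justified because $\Phi$ decays and $M$ is complete (cut-off functions, or the fact that near infinity $\Phi$ solves a flat-coefficient equation). Since $\mathrm{scal}_g - \sigma_f > 0$ strictly on $\mathrm{supp}(\mathrm d f)$, we conclude $\nabla\Phi \equiv 0$ and $\Phi \equiv 0$ on $\mathrm{supp}(\mathrm d f)$; parallelism then forces $\Phi \equiv 0$ on all of $M$ (a parallel spinor vanishing on an open set vanishes identically on a connected manifold), contradicting $\Phi \neq 0$.

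I expect the main obstacle to be the non-compactness bookkeeping: making the index of $D_E$ well-defined and nonzero requires care since $M$ is open, and one needs the "locally constant near infinity" hypothesis precisely so that $E$ is trivialized at infinity and $D_E$ is a genuine Fredholm operator on $L^2$ — then either a relative/coarse index theorem or a direct argument comparing with the closed model (e.g. compactifying the ends, or using that outside a compact set $D_E$ is a sum of invertible-at-infinity copies of the untwisted Dirac operator) gives nonvanishing index from $\deg(f)\neq 0$. A secondary technical point is ensuring the harmonic spinor decays fast enough to justify the integration by parts and the unique continuation step; this follows from standard elliptic decay estimates on the flat ends but should be stated carefully. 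Since the excerpt explicitly says this is "a slight variation of the result of Zhang \cite{Zhang}," I would import the analytic Fredholm package from there and focus the exposition on the Weitzenböck estimate and the curvature bound $\mathcal{R}^E \geq -\tfrac14\sigma_f$, which is where the scalar curvature comparison with $\sigma_f$ enters.
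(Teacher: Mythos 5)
There is a genuine gap in the Fredholmness and index step, and it is exactly the point where the paper's proof (following Zhang) departs from your sketch. You write that because $E$ is flat near infinity, $D_E$ ``is a Fredholm operator with a well-defined index,'' and later that outside a compact set $D_E$ is ``a sum of invertible-at-infinity copies of the untwisted Dirac operator.'' Neither statement is justified under the standing hypothesis $\mathrm{scal}_g \geq 0$ (which is all you have after assuming the conclusion fails). Flatness of $E$ at infinity only says $D_E$ restricts there to a direct sum of copies of the untwisted Dirac operator $D$, and the Lichnerowicz formula gives $D^2 = \nabla^*\nabla + \tfrac14\mathrm{scal}_g \geq \nabla^*\nabla \geq 0$ — nonnegativity, not the uniform positivity $D^2 \geq c > 0$ outside a compact set that one needs for invertibility at infinity. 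Without that, $D_E$ is not Fredholm on $L^2$, the Gromov--Lawson relative index theorem does not apply as you invoke it, and the harmonic spinor $\Phi$ you integrate against may simply not exist. This is not a bookkeeping issue; a complete manifold with $\mathrm{scal}_g \geq 0$ decaying to zero along the ends is a legitimate configuration in which the untwisted operator fails to be invertible at infinity.

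The paper handles this precisely via Zhang's deformation: one considers $\mathcal{D}^E_\e = \mathcal{D}^E + \e\,\varphi\, f^*V$, where $V$ is a self-adjoint odd endomorphism of the twisting bundle with $V^2$ a positive multiple of the identity, and $\varphi$ is a cutoff equal to $1$ on $(M\setminus\mathrm{supp}(\mathrm d f))\cup U_{1/2}$ and $0$ on $V_{1/2}$. Near infinity $f$ is constant, so $f^*V$ is parallel and $\varphi\equiv 1$, whence $(\mathcal{D}^E_\e)^2 \geq \e^2 V^2 > 0$ there; this gives Fredholmness, and the relative index theorem yields $\mathrm{ind}(\mathcal{D}^E_{\e,+}) \neq 0$ from $\deg f \neq 0$. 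The Bochner--Lichnerowicz--Weitzenb\"ock identity \eqref{eqn:Lich} together with the curvature bound \eqref{eqn:R^E} then shows the relevant zeroth-order term of $(\mathcal{D}^E_\e)^2$ is nonnegative for $\e$ small, forcing the kernel to vanish and contradicting the nonzero index. Your Weitzenb\"ock computation and the unique-continuation-to-zero endgame are in the right spirit, but they sit on top of an index claim that needs the $V$-deformation (or some other mechanism ensuring positivity at infinity) to be true. If you want to run your version with a bare harmonic spinor, you would have to add a hypothesis like $\mathrm{scal}_g \geq c>0$ outside a compact set, which the proposition does not assume.
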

\begin{proof}
This is \cite[Theorem 2.1 and 2.2]{Zhang} with slight modification in the proof. Since this is almost identical, we only point out the necessary modifications. We take a compact set $K\Subset M$ so that $f$ is locally constant on $M\setminus K$.
The argument will be divided into the following two cases:

\medskip

{\it Case 1. $n=2m$ is even.} We follow the argument of \cite[Theorem 2.1]{Zhang}. Let $S(TM)=S_+(TM)\oplus S_-(TM)$ be the $\mathbf{Z}_2$-graded Hermitian vector bundle of spinors associated to $(TM,g)$ with the canonical induced Hermitian connection $\nabla^{S(TM)}$, and $S(T\mathbb{S}^n)=S_+(T\mathbb{S}^n)\oplus S_-(T\mathbb{S}^n)$ be the $\mathbf{Z}_2$-graded Hermitian vector bundle of spinors associated to $(T\mathbb{S}^n,g_{\mathrm{sph}})$. Let $E=E_+\oplus E_-$ be the pull-back of $S_+(T\mathbb{S}^n)\oplus S_-(T\mathbb{S}^n)$, $\mathcal{D}^E$ be the canonically defined (twisted by $E$) Dirac operator, and $V$ be the self-adjoint odd endomorphism as defined in \cite[(2.6)]{Zhang}.

Slightly different from the argument in \cite[Theorem 2.1]{Zhang}, we choose $\varphi:M\to [0,1]$ to be a smooth function such that $\varphi=1$ on $(M\setminus \mathrm{supp}(\mathrm{d}f))\cup U_{1/2}$ and $\varphi=0$ on $V_{1/2}$, where $U_{1/2}=\{ x\in \mathrm{supp}(\mathrm{d}f): |\mathrm{d}f(x)|<1/2\}$ and $V_{1/2}=\{ x\in \mathrm{supp}(\mathrm{d}f): \sigma_f>\frac12 n(n-1)\}$ instead. Now we can carry out the same argument by considering the deformed twisted Dirac operator
$$\mathcal{D}^E_\e =\mathcal{D}^E+\e \varphi f^*V.$$
Observe that the choice of $V_{1/2}$ plays no role in relative index Theorem so that \cite[(2.13)]{Zhang} still holds: $\mathrm{ind}(\mathcal{D}^E_{\e,+})\neq 0$. At the same time, we still have \cite[(2.9)]{Zhang}:
\begin{equation*}
(\mathcal{D}^E_\e)^2=(\mathcal{D}^E)^2+ \e c(d\varphi)f^*V +\e \varphi [\mathcal{D}^E,f^*V]+\e^2\varphi^2 f^*(V^2),
\end{equation*}
where $c(\cdot)$ is the Clifford action and $[\cdot,\cdot]$ is the supercommutator. On the other hand, Bochner-Lichnerowicz-Weitzenb\"ock formula infers that
\begin{equation}\label{eqn:Lich}
(\mathcal{D}^E)^2=-\Delta^E +\frac14 \mathrm{scal}_g+\mathcal{R}^E.
\end{equation}
It is well-known that the curvature of the bundle $E$ satisfies
\begin{equation}\label{eqn:R^E}
|\mathcal{R}^Es|\leq \sigma_f |s|
\end{equation}
for all $s\in S_x\otimes E_x$, for example see \cite[Proposition A.1]{BarBrendleHankeWang}. Using \eqref{eqn:R^E} instead of \cite[(2.15)]{Zhang}, we see that for any $x\in V_{1/2}$,
\begin{equation*}
(\mathcal{D}^E_\e)^2+\Delta^E \geq 0,
\end{equation*}
while on $\mathrm{supp}(\mathrm{d}f)\setminus \overline{V_{1/2}}$, \cite[(2.17)]{Zhang} still holds thanks to the choice of $V_{1/2}$, \eqref{eqn:Lich} and \eqref{eqn:R^E}. This will contradict with $\mathrm{ind}(\mathcal{D}^E_{\e,+})\neq 0$ if $\e$ is small enough.

\medskip

{\it Case 2. $n=2m+1$ is odd.} The argument is identical to that of \cite[Theorem 2.4]{Zhang} using the modification made in the even case above.
\end{proof}

\end{document}